\newtheorem{thm}{Theorem}[section]
\newtheorem{prop}[thm]{Proposition}
\newtheorem{cor}[thm]{Corollary}
\theoremstyle{definition}
\newtheorem{definition}[thm]{Definition}
\theoremstyle{remark}
\newtheorem{remark}[thm]{Remark}
\title{Matrix-valued bispectral discrete orthogonal polynomials}
\author{Ignacio Bono Parisi}
\subjclass[2020]{33C45, 42C05, 39A70, 34L10}
\thanks{This paper was partially supported by SeCyT-UNC, PIP 33620230100819CB, CONICET, PIP 1220150100356.}
\keywords{Discrete orthogonal polynomials, Matrix-valued orthogonal polynomials, Difference operators, Discrete-discrete bispectrality, Matrix-valued bispectral functions}
\address{CIEM-FaMAF\\ Universidad Nacional de C\'or\-do\-ba\\
CP 5000, C\'or\-do\-ba,  Argentina}
\email{ignacio.bono@unc.edu.ar}
\begin{document}
\begin{abstract}
    We develop a unified construction of matrix-valued orthogonal polynomials associated with discrete weights, yielding bispectral sequences as eigenfunctions of second-order difference operators. This general framework extends the discrete families in the classical Askey scheme to the matrix setting by producing explicit matrix analogues of the Krawtchouk, Hahn, Meixner, and Charlier polynomials. Our results include explicit expressions for the weights, the orthogonal polynomials, and the corresponding difference operators. 
\end{abstract} 
\maketitle

\section{Introduction}
The classical families of discrete orthogonal polynomials in the Askey scheme —such as Charlier, Meixner, Hahn, and Krawtchouk polynomials— are characterized by their bispectral nature: they satisfy a three-term recurrence relation and are eigenfunctions of a second-order difference operator. In fact, these four families are the only scalar orthogonal polynomials supported on the real line that satisfy such a difference equation with coefficients independent of the degree. These families play a fundamental role in analysis and combinatorics and find applications in areas such as numerical analysis, statistics, and coding theory.

In 1949, M. G. Krein introduced the matrix-valued extension of the theory of orthogonal polynomials \cites{K49,K71}. Decades later, in 1997, A. J. Durán formulated the matrix-valued version of Bochner’s problem \cite{D97}, which sparked significant interest in the continuous case. Since then, there has been remarkable progress, with a growing list of explicit examples of matrix-valued orthogonal polynomials that are eigenfunctions of second-order differential operators, see \cites{GPT05, CG06, DG04, G03, GPT01, GPT02, DG07, PT07, P08, PR08, PZ16, KPR12, BP24-1}. This progress also includes a classification of the matrix Bochner problem under additional hypotheses \cite{CY18}. These developments have revealed profound connections with representation theory, approximation theory, harmonic analysis, operator theory, and the theory of special functions, showcasing the conceptual depth and algebraic richness of matrix-valued orthogonal polynomials.

In contrast to the continuous case, the discrete matrix-valued theory has received comparatively less attention. Some relevant contributions in this direction include \cites{D12,ADR13,DS14,EMR24}. Explicit expressions for the orthogonal polynomials are rarely available, and known examples are limited to certain Charlier and Meixner-type weights, with Rodrigues-type formulas provided either explicitly or in implicit form.

\

The purpose of this paper is to address this gap by enriching the literature with a diverse set of explicit and tractable nontrivial examples of bispectral matrix-valued orthogonal polynomials associated with discrete weights. These examples offer clear and practical illustrations to facilitate further exploration and applications in the theory of matrix orthogonality on discrete sets. To achieve this, we present a unified and explicit method for constructing a wide class of such polynomials associated with a weight matrix supported on a discrete set. The starting point is a collection of scalar discrete weights $w_{i}(x)$, $1\leq i \leq m$, supported on the same discrete set $\mathcal{J}$, together with their associated sequence of monic orthogonal polynomials $p_{n}^{w_{i}}(x)$. With these scalar weights, we construct a $m\times m$ weight matrix of the form 
$$W(x) = e^{Ax}\operatorname{diag}(w_{1}(x),\ldots,w_{m}(x))e^{A^{\ast}x},$$
where $A$ is a two-step nilpotent matrix. Then, we give an explicit closed formula for an associated sequence of matrix-valued orthogonal polynomials for $W$ in terms of $A$ and the scalar polynomials $p_{n}^{w_{i}}(x)$, see \eqref{Qn} and \eqref{Qn exp}.

A key feature of this construction is its generality: no additional assumptions are required on the scalar weights beyond the existence of their orthogonal polynomial sequences, and there is no need for $W$ to satisfy any Pearson-type equation or an extra structural condition. This grants significant flexibility, allowing the construction of many new examples not accessible via traditional approaches that rely on Pearson equations and Rodrigues-type formulas. Another advantage of our construction is that it provides a closed-form expression for each polynomial, in contrast to approaches based on Rodrigues formulas, where computing the $n$-th polynomial typically requires the recursive application of $n$ difference operators. This leads to expressions that are implicit, combinatorially intricate, and computationally demanding.

\

In addition to the generality and explicitness of the construction, a natural question is whether the resulting matrix-valued orthogonal polynomials also satisfy a difference equation, that is, whether they are bispectral. We address this in Theorem~\ref{bispectral}, where we provide a sufficient condition on the scalar polynomials $p_n^{w_i}(x)$ to ensure that the corresponding matrix-valued sequence is an eigenfunction of a second-order difference operator. This condition is then shown to hold in Theorem~\ref{classical bispectral} for all classical families of scalar discrete orthogonal polynomials (with a mild restriction on the parameters in the Hahn case). As a consequence, our construction yields bispectral matrix-valued extensions of all classical discrete families.

To the best of our knowledge, these provide the first explicit matrix-valued extensions of the Krawtchouk and Hahn families within this bispectral framework. We also introduce a novel bispectral family that combines distinct scalar discrete weights, specifically Charlier and Meixner weights. This highlights the versatility of our framework, as it allows for the construction of new families beyond the traditional single-weight setting. A similar strategy was used in our previous work \cite{BP25} to construct a matrix-valued bispectral family by combining Hermite and Laguerre weights in the continuous case.

\

Beyond their theoretical interest, these matrix-valued families could potentially be of interest in areas such as coding theory, discrete integrable systems, or matrix-valued random walks, where notions of orthogonality and bispectrality have appeared in various contexts. We expect that their explicit nature may facilitate future developments in these directions.

\

The paper is organized as follows. In Section \ref{section 2}, we present the necessary preliminaries, introducing the definition of a discrete weight matrix, the associated sequence of matrix-valued orthogonal polynomials, the algebra of difference operators, and recalling definitions and key properties of the classical discrete families. In Section \ref{section 3}, we develop the main tools used in this work, constructing the $m \times m$ weight matrix $W$ from a collection of scalar discrete weights. We then state Theorem \ref{main}, which provides an explicit expression for an orthogonal polynomial sequence for $W$, and Theorem \ref{bispectral}, which gives a sufficient condition ensuring that the constructed sequence is bispectral. In Section \ref{section 4}, we demonstrate that our construction extends all classical discrete scalar families to the matrix-valued setting while preserving bispectrality. That is, each such extension yields a sequence of $m \times m$ orthogonal polynomials that are eigenfunctions of a second-order matrix-valued difference operator. We conclude the section by presenting explicit $2 \times 2$ examples of Krawtchouk, Hahn, and mixed Charlier-Meixner type.

\section{Preliminaries} \label{section 2}
\subsection{Orthogonal polynomials and the algebra of difference operators $\mathcal{D}(W)$}

We aim to construct matrix-valued analogues of the classical discrete scalar families of Charlier, Meixner, Krawtchouk, and Hahn polynomials. Throughout this section, we fix some basic notation and introduce the type of matrix weights we will work with.

\

\begin{definition}
Let $\operatorname{Mat}_{m}(\mathbb{C})$ denote the space of $m \times m$ complex matrices. Let $\mathcal{J} \subset \mathbb{Z}$ be either $\mathbb{N}_0$ or a finite set of the form $\{0,1,\ldots,N\}$. A \emph{weight matrix} supported on a discrete set $\mathcal{J}$ is a function
\[
W:\mathbb{Z} \to \operatorname{Mat}_{m}(\mathbb{C})
\]
such that $W(x)$ is Hermitian positive definite for all $x \in \mathcal{J}$, vanishes for $x \notin \mathcal{J}$, and has finite moments of all orders, i.e.,
\[
\sum_{x \in \mathcal{J}} x^{n} W(x) < \infty \quad \text{for all } n \geq 0.
\]
\end{definition}

Given such a weight, we define a sesquilinear form on the space $\operatorname{Mat}_{m}(\mathbb{C})[x]$ of matrix-valued polynomials by
\[
\langle P, Q \rangle_{W} = \sum_{x \in \mathcal{J}} P(x) W(x) Q(x)^*, \quad \text{for all } P,Q \in \operatorname{Mat}_{m}(\mathbb{C})[x],
\]
where $^*$ denotes the conjugate transpose.

A sequence $\{P_n(x)\}$ ($n \in \mathbb{N}_{0}$, or $n = 0,1,\dots,N$ if $\mathcal{J}$ is a finite set) of $m \times m$ matrix-valued polynomials is said to be a sequence of orthogonal polynomials for $W$ if $\deg(P_n) = n$, the leading coefficient of $P_n(x)$ is invertible, and $\langle P_n, P_k \rangle_W = 0$ for all $n \ne k$. If the leading coefficient of each $P_n(x)$ is the identity matrix, we say that the sequence is \emph{monic}.

\begin{remark}
When the support $\mathcal{J}$ is infinite (i.e., $\mathcal{J} = \mathbb{N}_0$), any matrix-valued polynomial $P$ with nonsingular leading coefficient satisfies that $\langle P, P \rangle_{W}$ is invertible. This guarantees the existence of a unique sequence $\{P_n(x)\}_{n \geq 0}$ of monic orthogonal polynomials with respect to $W$.
\end{remark}

\begin{remark}
When the support is finite, say $\mathcal{J} = \{0,1,\ldots,N\}$, it is well known that only $N+1$ monic orthogonal polynomials exist. Indeed, any matrix-valued polynomial of degree greater than $N$ is linearly dependent on lower-degree polynomials when restricted to $\mathcal{J}$, since for any $k > N$, one can solve
\[
x^k = a_0 + x a_1  + \cdots + x^{N} a_N, \quad \text{for } x = 0,1,\ldots,N.
\]
This linear dependence prevents the existence of orthogonal polynomials beyond degree $N$.

However, in our construction of matrix-valued orthogonal polynomials, we require the $(N+1)$-th polynomial, particularly in \eqref{Qn}. Then, we adopt the natural extension
$$
p_{N+1}(x) := x(x-1)\cdots(x-N),
$$
which corresponds to the unique monic polynomial of degree $N+1$ that follows from the three-term recurrence relation satisfied by the orthogonal polynomials and is orthogonal with all lower-degree polynomials, see Proposition \ref{N+1} in the Appendix.
\end{remark}

By a standard argument (see \cite{K49} or \cite{K71}), one obtains that any sequence of orthogonal polynomials $\{P_n(x)\}$ with respect to $W$ satisfies a three-term recurrence relation of the form
\begin{equation}\label{three-term}
P_n(x) x = A_n P_{n+1}(x) + B_n P_n(x) + C_n P_{n-1}(x),
\end{equation}
for some matrices $A_n, B_n, C_n \in \operatorname{Mat}_m(\mathbb{C})$, where we adopt the convention $P_{-1}(x) = 0$.

The three-term recurrence relation defines a discrete operator $\mathcal{L} = A_{n}\mathscr{S} + B_{n} + C_{n} \mathscr{S}^{-1}$, where $\mathscr{S}^{k}$ acts on the left-hand side on a sequence as $\mathscr{S}^{k} \cdot p_{n} = p_{n+k}$, for $k \in \mathbb{Z}$. Thus, we have that 
$$\mathcal{L}\cdot P_{n}(x) = P_{n}(x)x.$$

\ 

Throughout this paper, we consider difference operators 
\begin{equation}\label{dif op}
    D = \sum_{j=0}^{s_{1}} \Delta^{j}F_{j}(x) + K(x) + \sum_{l=0}^{s_{2}}\nabla^{l} G_{l}(x),
\end{equation}
where 
\begin{equation*}
    \Delta(f(x)) = f(x+1)-f(x), \quad \nabla(f(x)) = f(x) - f(x-1),
\end{equation*}
and $F_{j},K,G_{l}$ are matrix-valued polynomials. These operators act on the right-hand side of matrix-valued functions as follows 
$$P(x) \cdot D = \sum_{j=0}^{s_{1}} \Delta^{j}(P(x))F_{j}(x) +  P(x)K(x) + \sum_{l=0}^{s_{2}}\nabla^{l}(P(x))G_{l}(x) .$$

From \cite{D12}, we introduce the definition of the algebra $\mathcal{D}(W)$. 

\begin{definition}
Given a weight matrix $W$ together with an associated sequence of orthogonal polynomials $P_{n}(x)$, the algebra $\mathcal{D}(W)$ is the algebra of all difference operators $D$ as defined in \eqref{dif op} that have the sequence $P_{n}(x)$ as eigenfunctions, i.e., the operators $D$ such that 
$$P_{n}(x) \cdot D = \Lambda_{n}(D) P_{n}(x),$$
for all $n$, with $\Lambda_{n}(D)\in \operatorname{Mat}_{m}(\mathbb{C})$.
\end{definition}

When $\mathcal{D}(W)$ contains a nontrivial operator $D$, we have together with the discrete operator of the three-term recurrence relation that 
\begin{equation*}
    \mathcal{L}\cdot P_{n}(x) = P_{n}(x)x, \quad \text{and} \quad P_{n}(x) \cdot D = \Lambda_{n}(D)P_{n}(x).
\end{equation*}
That is, the sequence $P_n(x)$ is simultaneously an eigenfunction of a left-hand side discrete operator with eigenvalue $x$, and of a right-hand side difference operator with eigenvalue $\Lambda_{n}(D)$. In this case, we say that the sequence ${P_n(x)}$ is \emph{bispectral}.
 
\subsection{Classical discrete scalar polynomials} \label{sub clas}
We recall from \cite{KS94} the classical scalar discrete polynomials together with their properties. We denote by $(a)_{n}$ the Pochhamer symbol, i.e, $(a)_{0} = 1$, $(a)_{n} = a(a+1)\cdots(a+n-1)$ for $n \in \mathbb{N}$, $a \in \mathbb{C}$.  
\subsubsection{Charlier Polynomials}

Let $b > 0$, the monic Charlier polynomials $C^{(b)}_{n}(x)$, are orthogonal with respect to the Charlier weight $w_{b}(x) = \frac{b^{x}}{x!}$ supported on $\mathbb{N}_{0}$. They satisfy the difference equation $C_{n}^{(b)}(x) \cdot \delta_{b} = \Lambda_{n}(\delta_{b})C_{n}^{(b)}(x)$, where:
$$\delta_{b} = (\Delta \, b  - \nabla \, x), \quad \text{ and } \quad \Lambda_{n}(\delta_{b}) =  -n$$
and are given by the Rodrigues formula, 
$$C^{(b)}_{n}(x) =(-b)^{n} \frac{x!}{b^{x}}\nabla^{n} \left ( \frac{b^{x}}{x!} \right ).$$
They satisfy the three-term recurrence relation
$$C_{n}^{(b)}(x)x = C^{(b)}_{n+1}(x) + (n+b)C^{(b)}_{n}(x) + nbC^{(b)}_{n-1}(x).$$
The squared norm is given by 
$$\|C_{n}^{(b)}(x)\|^{2} = n!e^{b}b^{n}.$$

\subsubsection{Meixner Polynomials}

Let $\beta > 0$ and $0 < c < 1$. The monic Meixner polynomials $M^{(\beta,c)}_{n}(x)$ are orthogonal with respect to the Meixner weight $w_{\beta,c}(x) = (\beta)_{x}\frac{c^{x}}{x!}$ supported on $\mathbb{N}_{0}$. They satisfy the difference equation $M_{n}^{(\beta,c)}(x) \cdot \delta_{\beta,c} = \Lambda_{n}(\delta_{\beta,c})M_{n}^{(\beta,c)}(x)$, where:
$$\delta_{\beta,c} = \Delta \, c(x+\beta) -  \nabla \, x \quad \text{ and } \quad \Lambda_{n}(\delta_{\beta,c}) =  n(c-1)$$
and are given by the Rodrigues formula, 
$$M^{(\beta,c)}_{n}(x) =(\beta)_{n} \frac{c^{n}}{(c-1)^{n}}\frac{x!}{(\beta)_{x}c^{x}}\nabla^{n} \left ( \frac{(\beta+n)_{x}c^{x}}{x!} \right ).$$
They satisfy the three-term recurrence relation
$$M_{n}^{(\beta,c)}(x)x = M_{n+1}^{(\beta,c)}(x) + \frac{n+(n+\beta)c}{1-c}M_{n}^{(\beta,c)}(x) + \frac{n(n+\beta-1)c}{(1-c)^{2}}M_{n-1}^{(\beta,c)}(x).$$
The squared norm is given by 
$$\|M_{n}^{(\beta,c)}(x)\|^{2} = (\beta)_{n}n!\frac{c^{n}}{(1-c)^{2n+\beta}}.$$

\subsubsection{Krawtchouk Polynomials}

Let $0 < p < 1$ and $N\in \mathbb{N}$. The monic Krawtchouk polynomials $K^{(p,N)}_{n}(x)$ are orthogonal with respect to the Krawtchouk weight $w_{p,N}(x) = \binom{N}{x} p^{x}(1-p)^{N-x}$ supported on $\{ 0 , 1 , \ldots , N \}$. They are given by the Rodrigues formula, 
$$K^{(p,N)}_{n}(x) =\frac{(-N)_{n}p^{n}}{\binom{N}{x} \left ( \frac{ p}{1-p} \right ) ^{x}}\nabla^{n} \left ( \binom{N-n}{x} \left ( \frac{ p}{1-p} \right )^{x}\right ).$$
The squared norm is given by 
$$\|K_{n}^{(p,N)}(x)\|^{2} = (-N)_{n}(-1)^{n}n!p^{n}(1-p)^{n}.$$
They satisfy the three-term recurrence relation
$$K_{n}^{(p,N)}(x)x = K_{n+1}^{(p,N)}(x) + (p(N-n)+n(1-p))K_{n}^{(p,N)}(x) + np(1-p)(N+1-n)K_{n-1}^{(p,N)}(x),$$
for $n = 0,\ldots, N$. 

The sequence $K_{n}^{(p,N)}$ satisfies the difference equation $K_{n}^{(p,N)}(x) \cdot \delta_{p,N} = \Lambda_{n}(\delta_{p,N})K_{n}^{(p,N)}(x)$, where:
$$\delta_{p,N} =  \Delta p(N-x) - \nabla x(1-p) \quad \text{ and } \quad  \Lambda_{n}(\delta_{p,N}) = -n.$$
\begin{remark}
    The polynomial $K_{N+1}^{(p,N)}(x) = x(x-1) \cdots (x-N)$ also satisfies the same difference equation, with the expected eigenvalue:
     $$K_{N+1}(x)^{(p,N)}(x) \cdot \delta_{p,N} = -(N+1)K_{N+1}^{(p,N)}(x).$$
\end{remark}

\subsubsection{Hahn Polynomials}
Let $N \in \mathbb{N}$, $\alpha,\beta > -1$, or $\alpha,\beta < -N$. The monic Hahn polynomials $H_{n}^{(\alpha,\beta,N)}(x)$ are orthogonal with respect to the Hahn weight $w_{\alpha,\beta,N}(x) = \binom{\alpha+x}{x}\binom{\beta+N-x}{N-x}$ supported on $\{0,1,\ldots,N\}.$ They are given by the Rodrigues formula,
$$H_{n}^{(\alpha,\beta,N)}(x) = \frac{(-1)^{n}(\alpha+1)_{n}(\beta+1)_{n}}{(n+\alpha+\beta+1)_{n}}\frac{1}{\binom{\alpha+x}{x}\binom{\beta+N-x}{N-x}}\nabla^{n}\left[\binom{\alpha+n+x}{x}\binom{\beta+N-x}{N-n-x}\right ].$$
The squared norm is given by
$$\|H_{n}^{(\alpha,\beta,N)}\|^{2} = (-1)^{n}\frac{(n+\alpha+\beta+1)_{N+1}}{(n+\alpha+\beta+1)_{n}^{2}}\frac{n!}{N!}\frac{(-N)_{n}(\alpha+1)_{n}(\beta+1)_{n}}{(2n+\alpha+\beta+1)}.$$
They satisfy the three-term recurrence relation
$$H_{n}^{(\alpha,\beta,N)}(x) x = H_{n+1}^{(\alpha,\beta,N)}(x) + (t_{n}+s_{n})H_{n}^{(\alpha,\beta,N)}(x)+t_{n-1}s_{n}H_{n-1}^{(\alpha,\beta,N)}(x), \quad \text{for }n =0,\dots,N,$$
where $t_{n} = \frac{(n+\alpha+\beta+1)(n+\alpha+1)(N-n)}{(2n+\alpha+\beta+1)(2n+\alpha+\beta+2)}$, and $s_{n} = \frac{n(n+\alpha+\beta+N+1)(n+\beta)}{(2n+\alpha+\beta)(2n+\alpha+\beta+1)}$. 

 They satisfy the difference equation $H_{n}^{(\alpha,\beta,N)}(x) \cdot \delta_{\alpha,\beta,N} = \Lambda_{n}(\delta_{\alpha,\beta,N})H_{n}^{(\alpha,\beta,N)}(x)$, where:
$$\delta_{\alpha,\beta,N} =  \Delta (x+\alpha+1)(x-N)-\nabla x(x-\beta-N-1) \quad \text{and} \quad \Lambda_{n}(\delta_{\alpha,\beta,N}) = n(n+\alpha+\beta+1).$$
\begin{remark}
    The polynomial $H_{N+1}^{(\alpha,\beta,N)}(x) = x(x-1) \cdots (x-N)$ also satisfies the same difference equation, with the expected eigenvalue:
     $$H_{N+1}^{(\alpha,\beta,N)}(x) \cdot \delta_{\alpha,\beta,N} = -(N+1)(N+\alpha+\beta+2)H_{N+1}^{(\alpha,\beta,N)}(x).$$
\end{remark}

\section{Construction of bispectral matrix discrete polynomials}\label{section 3}
In this section, we introduce the discrete weight matrices, together with an explicit expression of the associated sequence of orthogonal polynomials. We also give a condition that ensures that the constructed sequence is bispectral. 

\ 

Let $w_{1},w_{2},\ldots,w_{m}$ be scalar weights supported on the same discrete set $\mathcal{J}$ ($\mathbb{N}_{0}$ or $\{0,1,\ldots,N\}$). We define the $m\times m$ weight matrix given by 
\begin{equation}\label{W}
    W(x) = T(x)\tilde{W}(x)T(x)^{\ast}, \quad x \in \mathcal{J},
\end{equation}
where $\tilde{W}(x) = \operatorname{diag}(w_{1}(x),\ldots,w_{m}(x))$, and $T(x) = e^{Ax} = I + Ax$, with $A$ the two-step nilpotent matrix defined by 
\begin{equation}\label{A}
A = \sum_{j=1}^{[m / 2]} a_{2j-1} E_{2j-1, 2j} + \sum_{j=1}^{[(m-1)/2]} a_{2j} E_{2j+1, 2j}, \quad a_{j}\in\mathbb{R}\setminus\{0\}.
\end{equation}
For $m = 2$, and $m = 3$ we have respectively
\begin{equation*}
        W(x) = \begin{pmatrix} 1 & ax \\ 0 & 1 \end{pmatrix} \begin{pmatrix} w_{1}(x) & 0 \\ 0 & w_{2}(x) \end{pmatrix} \begin{pmatrix} 1 & 0 \\ ax & 1 \end{pmatrix} =  \begin{pmatrix}w_{1}(x) + w_{2}(x)a^{2}x^{2} & w_{2}(x)ax \\w_{2}(x)ax & w_{2}(x) \end{pmatrix}, 
\end{equation*}
and
\begin{equation*}
    \begin{split}
            W(x) & = \begin{pmatrix} 1 & a_{1}x & 0 \\ 0 & 1 & 0 \\ 0 & a_{2}x & 1 \end{pmatrix} \begin{pmatrix} w_{1}(x) & 0 & 0 \\ 0 & w_{2}(x) & 0 \\ 0 & 0 & w_{3}(x) \end{pmatrix} \begin{pmatrix} 1 & 0 & 0 \\ a_{1}x & 1 & a_{2}x \\ 0 & 0 & 1 \end{pmatrix} \\
            & = \begin{pmatrix}w_{1}(x) + w_{2}(x)a_{1}^{2}x^{2} & w_{2}(x)a_{1}x  & w_{2}(x)a_{1}a_{2}x^{2} \\ w_{2}(x)a_{1}x & w_{2}(x) & w_{2}(x)a_{2}x \\ w_{2}(x)a_{1}a_{2}x^{2} & w_{2}(x) a_{2}x & w_{3}(x) + w_{2}(x)a_{2}^{2}x^{2}  \end{pmatrix}.
    \end{split}
\end{equation*}
We denote by $p_{n}^{w_{i}}(x)$ the $n$-th monic orthogonal polynomial for the scalar weight $w_{i}(x)$. Therefore, we have that $P_{n}(x) = \operatorname{diag}(p_{n}^{w_{1}}(x),\ldots,p_{n}^{w_{m}}(x))$ is the sequence of monic orthogonal polynomials for the diagonal weight $\tilde{W}(x) = \operatorname{diag}(w_{1}(x),\ldots,w_{m}(x))$. We have the square norm given by 
\begin{equation*}
    \begin{split}
        \|P_{n}\|^{2} = \langle P_{n}, P_{n} \rangle_{\tilde{W}} &=\sum_{j\in \mathcal{J}}P_{n}(x)\operatorname{diag}(w_{1}(x),\ldots,w_{m}(x))P_{n}(x)^{\ast} \\
        & = \operatorname{diag}(\|p_{n}^{w_{1}}\|^{2},\ldots, \|p_{n}^{w_{m}}\|^{2}),
    \end{split}
\end{equation*}
which is an invertible matrix for all $n \in \mathbb{N}_{0}$ if the support $\mathcal{J}$ is infinite, and for all $0 \leq n \leq N$ if $\mathcal{J}$ is finite. In the finite-support case, we recall that we extend the scalar sequence by setting $p_{N+1}^{w_{i}}(x) = x(x-1)\cdots(x-N)$; in that case, the corresponding norm vanishes, that is, $\|P_{N+1}\|^{2} = 0$.
Now, we state our main theorem.
 
\smallskip

\medskip
\begin{thm}\label{main}
Let $W(x) = T(x)\tilde{W}(x)T(x)^{\ast}$ be as defined in \eqref{W}, let $A$ be as in \eqref{A}, and let $P_{n}(x) = \operatorname{diag}(p_{n}^{w_{1}}(x),\dots,p_{n}^{w_{m}}(x))$ denote the sequence of monic orthogonal polynomials for $\tilde{W}$. Then,
\begin{equation}\label{Qn}
\begin{split}
Q_n(x) &= P_n(x) + A P_{n+1}(x) - \|P_n\|^2 A^{\ast} \|P_{n-1}\|^{-2} P_{n-1}(x) \\
&\quad - P_n(x)Ax + \|P_n\|^2 A^{\ast} \|P_{n-1}\|^{-2} P_{n-1}(x)Ax
\end{split}
\end{equation}
is a sequence of orthogonal polynomials for $W$ for all $n \in \mathbb{N}_0$ if the support is infinite, and for $n = 0, 1, \ldots, N$ if the support is finite. We adopt the convention that $P_{-1}(x)=0$, so that for $n=0$ the formula yields 
$$
Q_0(x) = P_0(x) + A P_1(x) - P_0(x) A x.
$$
\end{thm}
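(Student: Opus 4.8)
The plan is to transfer the entire computation to the diagonal weight $\tilde W$, for which the orthogonal polynomials $P_n=\operatorname{diag}(p_n^{w_1},\dots,p_n^{w_m})$ and their norms are already understood. The basic observation is that, since $W=T\tilde W T^{\ast}$ with $T(x)=I+Ax$, for any matrix polynomials $P,Q$ we have
\[
\langle P,Q\rangle_W=\sum_{x\in\mathcal J}P(x)T(x)\tilde W(x)T(x)^{\ast}Q(x)^{\ast}=\langle PT,\,QT\rangle_{\tilde W}.
\]
Hence it suffices to analyse the transformed polynomials $Y_n:=Q_nT$ relative to $\tilde W$, and in particular to prove that $\langle Q_n,x^{j}I\rangle_W=\langle Y_n,x^{j}T\rangle_{\tilde W}$ vanishes for $0\le j\le n-1$, since this yields orthogonality of $Q_n$ to every polynomial of lower degree.

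First I would simplify $Y_n$. Writing $M_n:=\|P_n\|^2A^{\ast}\|P_{n-1}\|^{-2}$ and regrouping \eqref{Qn} as $Q_n=(P_n-M_nP_{n-1})(I-Ax)+AP_{n+1}$, the nilpotency $A^2=0$ gives $T^{-1}=I-Ax$, so multiplying on the right by $T$ yields $Y_n=P_n-M_nP_{n-1}+AP_{n+1}+AP_{n+1}(x)Ax$. The last term vanishes: for any diagonal matrix $D$ one has $ADA=0$, because by \eqref{A} the nonzero entries of $A$ occupy odd rows and even columns, so $A_{ij}\neq0$ forces $j$ even while $A_{jk}\neq0$ forces $j$ odd. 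Since $P_{n+1}(x)$ is diagonal this leaves the clean expression
\[
Y_n=P_n+AP_{n+1}-M_nP_{n-1},
\]
which is the heart of the argument.

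Next I would expand $\langle Q_n,x^{j}I\rangle_W=\langle Y_n,x^{j}I\rangle_{\tilde W}+\langle Y_n,x^{j+1}I\rangle_{\tilde W}A^{\ast}$ and evaluate it using $\langle P_\ell,x^{l}I\rangle_{\tilde W}=0$ for $l<\ell$ and $=\|P_\ell\|^2$ for $l=\ell$. For $j<n-2$ every contribution vanishes, and the two boundary indices are the crux. For $j=n-2$ only $\langle Y_n,x^{n-1}I\rangle_{\tilde W}A^{\ast}=-M_n\|P_{n-1}\|^2A^{\ast}=-\|P_n\|^2(A^{\ast})^2$ survives, which is zero by nilpotency together with the precise normalization of $M_n$. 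For $j=n-1$ the surviving part equals $-\|P_n\|^2A^{\ast}+\|P_n\|^2A^{\ast}-M_n\langle P_{n-1},x^{n}I\rangle_{\tilde W}A^{\ast}$; the first two terms cancel, while in the third $\langle P_{n-1},x^{n}I\rangle_{\tilde W}$ is a product of diagonal matrices, hence diagonal, so $M_n\langle P_{n-1},x^{n}I\rangle_{\tilde W}A^{\ast}=\|P_n\|^2A^{\ast}DA^{\ast}=0$, using that $A^{\ast}DA^{\ast}=0$ for diagonal $D$ by the same row/column pattern of $A$. Thus $\langle Q_n,x^{j}I\rangle_W=0$ for all $j<n$, which gives $\langle Q_n,Q_k\rangle_W=0$ for all $k\neq n$.

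It remains to fix the degree and the norm. The coefficient of $x^{n+1}$ in \eqref{Qn} is $A\cdot I-I\cdot A=0$, so $\deg Q_n\le n$. A direct evaluation gives the norm
\[
\langle Q_n,Q_n\rangle_W=\langle Y_n,Y_n\rangle_{\tilde W}=\|P_n\|^2+A\|P_{n+1}\|^2A^{\ast}+M_n\|P_{n-1}\|^2M_n^{\ast},
\]
a sum of one positive definite and two positive semidefinite matrices, hence positive definite; invertibility of this norm together with $\langle Q_n,Q_n\rangle_W=\langle Q_n,x^{n}I\rangle_W\,r_n^{\ast}$ (all lower-degree terms being orthogonal to $Q_n$) forces the leading coefficient $r_n$ to be invertible and $\deg Q_n=n$. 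In the finite-support case the identical computation applies once one notes that $p_{N+1}^{w_i}(x)=x(x-1)\cdots(x-N)$ vanishes on $\mathcal J$, so every pairing involving $P_{N+1}$ drops out and $\|P_{N+1}\|^2=0$. I expect the main obstacle to be precisely the two boundary indices $j\in\{n-2,n-1\}$: there the leftover terms do not vanish factor by factor, and one must combine the nilpotency $A^2=(A^{\ast})^2=0$, the structural identities $ADA=A^{\ast}DA^{\ast}=0$ for diagonal $D$, and the exact choice $M_n=\|P_n\|^2A^{\ast}\|P_{n-1}\|^{-2}$ that is engineered into \eqref{Qn} to annihilate the first off-diagonal.
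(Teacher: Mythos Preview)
Your proof is correct and follows essentially the same route as the paper: you reduce to the diagonal weight via $Y_n=Q_nT=P_n+AP_{n+1}-M_nP_{n-1}$, then exploit the structural identities $ADA=A^{\ast}DA^{\ast}=0$ for diagonal $D$ (which the paper packages as $M_1M_2=0$ for $M_1,M_2$ in the set $\mathcal{M}$) together with orthogonality of the $P_n$. The only noteworthy variation is cosmetic---you test $Q_n$ against monomials $x^{j}I$ whereas the paper pairs $Q_n$ directly with $Q_m$---and your self-contained argument for the invertible leading coefficient via positive-definiteness of $\langle Q_n,Q_n\rangle_W$ is a pleasant alternative to the paper's reference to \cite{BP25}.
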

\begin{proof}
    Following an argument similar to the one given in the proof of Theorem 3.1 in \cite{BP25}, one can show that $Q_{n}(x)$ is a polynomial of degree $n$ with a nonsingular leading coefficient. We will prove the orthogonality. First, we define the set $\mathcal{M}$ by
 \begin{equation}\label{set M}
 \mathcal{M} = \left \{  M = \sum_{j=1}^{[N/2]} m_{2j-1}E_{2j-1,2j} + \sum_{j=1}^{[(N-1)/2]}m_{2j}E_{2j+1,2j}, \quad m_{j} \in \mathbb{C} \right \}.
 \end{equation}
We note that the two-step nilpotent matrix $A$ belongs to $\mathcal{M}$. For any diagonal matrix $D = \operatorname{diag}(d_{1},\ldots,d_{m})$, and any matrix $M \in \mathcal{M}$, we have that $DM$ and $MD$ belong to $\mathcal{M}$. We also have that $M_{1}M_{2} = 0 = M_{2}M_{1}$ for every $M_{1},M_{2} \in \mathcal{M}.$

From the above observations, it follows that 
\begin{equation} \label{QnT}
    Q_{n}(x)T(x) = Q_{n}(x)e^{Ax}=Q_{n}(x)(I+Ax) = P_{n}(x) +AP_{n+1}(x) - \|P_{n}\|^{2}A^{\ast}\|P_{n-1}\|^{-2}P_{n-1}(x).
\end{equation}
 
Let $n\not= m$. Since $W(x) = T(x)\tilde{W}(x)T(x)^{\ast}$, with $\tilde{W}(x) = \operatorname{diag}(w_{1}(x),\ldots,w_{m}(x))$, we have that 
\begin{equation*}
        \begin{split}
            \langle Q_{n},Q_{m} \rangle_{W} & = \langle Q_{n}T, Q_{m}T \rangle_{\tilde{W}}  \\
            & = \langle P_{n}, P_{n}\rangle_{\tilde{W}}+ A\langle P_{n+1}, P_{m} \rangle_{\tilde{W}}-\langle P_{n}, P_{m-1}\rangle_{\tilde{W}} \|P_{m-1}\|^{-2}A\|P_{m}\|^{2} \\ 
            & \quad + \langle P_{n},P_{m+1} \rangle_{\tilde{W}} A^{\ast}-\|P_{n}\|^{2}A^{\ast}\|P_{n-1}\|^{-2}\langle P_{n-1}, P_{m}\rangle_{\tilde{W}} + A\langle P_{n+1}, P_{m+1} \rangle_{\tilde{W}} A^{\ast} \\
            & \quad  + \|P_{n}\|^{2}A^{\ast}\|P_{n-1}\|^{-2}\langle P_{n-1}, P_{m-1}\rangle_{\tilde{W}} \|P_{m-1}\|^{-2}A\|P_{m}\|^{2}.
        \end{split}
    \end{equation*}
From here, by the orthogonality of $P_{n}$ with respect to $\langle \cdot \, , \cdot \rangle_{\tilde{W}}$, we obtain that
    \begin{equation} \label{inner}
        \begin{split}
            \langle Q_{n},Q_{m}\rangle_{W} & = A\langle P_{n+1},P_{m} \rangle_{\tilde{W}} - \langle P_{n}, P_{m-1} \rangle_{\tilde{W}}\|P_{m-1}\|^{-2}A\|P_{m}\|^{2} \\
            & \quad + \langle P_{n},P_{m+1} \rangle_{\tilde{W}} A^{\ast} - \|P_{n}\|^{2}A^{\ast}\|P_{n-1}\|^{-2}\langle P_{n-1},P_{m} \rangle_{\tilde{W}}.
        \end{split}
    \end{equation}
Now, we need to consider three cases: $n = m+1$, $n = m-1$, and $n \neq m+1$ and $n \neq m-1$. In all three cases, it follows immediately that \eqref{inner} is equal to $0$. Finally, for $n=0$, it is clear that $Q_0$ is orthogonal to $Q_n$ for all $n \ge 1$.
\end{proof}

Explicitly, the expression of \eqref{Qn} is given by
{\footnotesize
\begin{equation}\label{Qn exp}
    \begin{split}
        Q_{n} & = \begin{psmallmatrix} p_{n}^{w_{1}} &  a_{1}(p_{n+1}^{w_{2}} - p_{n}^{w_{1}} \, x) & 0 & 0 & 0 & 0  & \cdots \\ -a_{1}\|p_{n}^{w_{2}}\|^{2} \frac{p_{n-1}^{w_{1}}}{\|p_{n-1}^{w_{1}}\|^{2}} & p_{n}^{w_{2}} & -a_{2}\|p_{n}^{w_{2}}\|^{2} \frac{p_{n-1}^{w_{3}}}{\|p_{n-1}^{w_{3}}\|^{2}} & 0 & 0 & 0 & \\ 0 & a_{2}(p_{n+1}^{w_{2}} - p_{n}^{w_{3}} \, x ) & p_{n}^{w_{3}} & a_{3}(p_{n+1}^{w_{4}} - p_{n}^{w_{3}} \, x ) & 0 & 0 & \cdots \\ 0 & 0 & -a_{3}\|p_{n}^{w_{4}}\|^{2} \frac{p_{n-1}^{w_{3}}}{\|p_{n-1}^{w_{3}}\|^{2}} & p_{n}^{w_{4}} & -a_{4}\|p_{n}^{w_{4}}\|^{2} \frac{p_{n-1}^{w_{5}}}{\|p_{n-1}^{w_{5}}\|^{2}} & 0 &  \\ 0 & 0 & 0 & a_{4}(p_{n+1}^{w_{4}} - p_{n}^{w_{5}} \, x) & p_{n}^{w_{5}} & a_{5}(p_{n+1}^{w_{6}}-p_{n}^{w_{5}} \, x) & \cdots  \\ 0 & 0 & 0 & 0 & -a_{5}\|p_{n}^{w_{6}}\|^{2} \frac{p_{n-1}^{w_{5}}}{\|p_{n-1}^{w_{5}}\|^{2}} & p_{n}^{w_{6}} & \ddots \\ \vdots & \vdots & \vdots &  \vdots & \vdots & \ddots & \ddots\end{psmallmatrix} \\ 
        & + \begin{psmallmatrix} 0 & 0 & 0 & 0 & 0 & 0 & \cdots \\ 0 & \|p_{n}^{w_{2}}\|^{2}\left (a_{1}^{2}\frac{p_{n-1}^{w_{1}}}{\|p_{n-1}^{w_{1}}\|^{2}} + a_{2}^{2}\frac{p_{n-1}^{w_{3}}}{\|p_{n-1}^{w_{3}}\|^{2}}\right ) & 0 & a_{2}a_{3}\|p_{n}^{w_{2}}\|^{2}\frac{p_{n-1}^{w_{3}}}{\|p_{n-1}^{w_{3}}\|^{2}} \, x & 0 & 0 & \cdots \\ 0 & 0 & 0 & 0 & 0 & 0 & \cdots \\ 0 & a_{2}a_{3}\|p_{n}^{w_{4}}\|^{2}\frac{p_{n-1}^{w_{3}}}{\|p_{n-1}^{w_{3}}\|^{2}} \, x & 0 & \|p_{n}^{w_{4}}\|^{2}\left (a_{3}^{2}\frac{p_{n-1}^{w_{3}}}{\|p_{n-1}^{w_{3}}\|^{2}} + a_{4}^{2}\frac{p_{n-1}^{w_{5}}}{\|p_{n-1}^{w_{5}}\|^{2}}\right ) & 0 & a_{4}a_{5}\|p_{n}^{w_{4}}\|^{2}\frac{p_{n-1}^{w_{5}}}{\|p_{n-1}^{w_{5}}\|^{2}} \, x & \cdots \\ 0 & 0 & 0 & 0 & 0 & 0  & \ddots \\ 0 & 0 & 0 & a_{4}a_{5}\|p_{n}^{w_{6}}\|^{2}\frac{p_{n-1}^{w_{5}}}{\|p_{n-1}^{w_{5}}\|^{2}} \, x  & 0 & \|p_{n}^{w_{6}}\|^{2}\left (a_{5}^{2}\frac{p_{n-1}^{w_{5}}}{\|p_{n-1}^{w_{5}}\|^{2}} + a_{6}^{2}\frac{p_{n-1}^{w_{7}}}{\|p_{n-1}^{w_{7}}\|^{2}}\right ) & \ddots \\ \vdots & \vdots  &  \vdots & \vdots & \ddots & \ddots & \ddots \end{psmallmatrix}.
    \end{split}
\end{equation}
}

\

We now turn to the question of bispectrality. While the previous theorem provides an explicit orthogonal sequence for any choice of scalar weights, it is natural to ask under what conditions this sequence also satisfies a difference equation, thus yielding bispectral polynomials. The following result addresses this question by giving a sufficient condition on the scalar weights to ensure that the matrix-valued polynomials are eigenfunctions of a second-order difference operator.

\begin{thm}\label{bispectral}
    Let $w_{i}$ be scalar discrete weights and $p_{n}^{w_{i}}$ their monic orthogonal polynomials, $1 \leq i \leq m$. Suppose each $p_{n}^{w_{i}}$ is an eigenfunction of a second-order difference operator $\delta_{i}$ with eigenvalue $\Lambda_{n}(\delta_{i})$ such that the condition 
    \begin{equation}\label{condition}
        \Lambda_{n}(\delta_{i}) = \Lambda_{n+1}(\delta_{j}) \quad \text{for all odd } i \text{ and even } j
    \end{equation}
    is satisfied. Then, the matrix polynomials $Q_{n}$ constructed in Theorem \ref{main} are eigenfunctions of a second-order difference operator.
\end{thm}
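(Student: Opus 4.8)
The plan is to reduce everything to the diagonal weight $\tilde W$ and then transport the resulting operator to $W$ through the linear factor $T(x)=I+Ax$, using that $T(x)^{-1}=I-Ax$ since $A$ is two-step nilpotent.

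\textbf{Step 1 (a block-diagonal operator for $\tilde W$).} Writing each scalar operator in the form $\delta_i=\Delta F_i(x)+\nabla G_i(x)$ with scalar polynomial coefficients, I would assemble the diagonal matrix difference operator
\[
\delta=\Delta\,\operatorname{diag}(F_1,\dots,F_m)+\nabla\,\operatorname{diag}(G_1,\dots,G_m).
\]
Acting entrywise on $P_n=\operatorname{diag}(p_n^{w_1},\dots,p_n^{w_m})$ gives at once
\[
P_n\cdot\delta=\Lambda_n P_n,\qquad \Lambda_n:=\operatorname{diag}\big(\Lambda_n(\delta_1),\dots,\Lambda_n(\delta_m)\big),
\]
a diagonal matrix eigenvalue.

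\textbf{Step 2 (transport to $R_n:=Q_nT$).} By \eqref{QnT} the object $R_n=P_n+AP_{n+1}-\|P_n\|^2A^\ast\|P_{n-1}\|^{-2}P_{n-1}$ is a fixed combination of $P_{n-1},P_n,P_{n+1}$, so applying $\delta$ termwise yields
\[
R_n\cdot\delta=\Lambda_n P_n+A\Lambda_{n+1}P_{n+1}-\|P_n\|^2A^\ast\|P_{n-1}\|^{-2}\Lambda_{n-1}P_{n-1}.
\]
I would then compare with $\Lambda_n R_n$. Since the norm matrices and all $\Lambda_k$ are diagonal, while $A$ has nonzero entries only in positions (odd row, even column) and $A^\ast$ only in (even row, odd column), the identity $R_n\cdot\delta=\Lambda_n R_n$ reduces precisely to the two intertwining relations
\[
A\Lambda_{n+1}=\Lambda_n A,\qquad \Lambda_n\|P_n\|^2A^\ast\|P_{n-1}\|^{-2}=\|P_n\|^2A^\ast\|P_{n-1}\|^{-2}\Lambda_{n-1}.
\]
Reading these off entrywise, the first requires $\Lambda_{n+1}(\delta_c)=\Lambda_n(\delta_r)$ whenever $A_{rc}\neq0$ (i.e.\ $r$ odd, $c$ even), and the second requires $\Lambda_{n-1}(\delta_c)=\Lambda_n(\delta_r)$ whenever $A^\ast_{rc}\neq0$ (i.e.\ $r$ even, $c$ odd); both are exactly hypothesis \eqref{condition}. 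Hence $R_n\cdot\delta=\Lambda_n R_n$.

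\textbf{Step 3 (conjugation by $T$) and conclusion.} Finally I would define $D$ as the conjugate of $\delta$ by $T$, namely $P\cdot D:=\big[(PT)\cdot\delta\big]T^{-1}$. Expanding $(PT)\cdot\delta$ with the difference product rule and regrouping shows, with $\mathcal F=\operatorname{diag}(F_i)$ and $\mathcal G=\operatorname{diag}(G_i)$,
\[
P\cdot D=P(x{+}1)\,T(x{+}1)\mathcal F\,T^{-1}+P(x)\,T(\mathcal G-\mathcal F)T^{-1}-P(x{-}1)\,T(x{-}1)\mathcal G\,T^{-1},
\]
whose coefficients are products of matrix polynomials, hence matrix polynomials; thus $D$ is a second-order difference operator of the form \eqref{dif op}. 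Then
\[
Q_n\cdot D=\big[(Q_nT)\cdot\delta\big]T^{-1}=(R_n\cdot\delta)T^{-1}=\Lambda_n R_n T^{-1}=\Lambda_n Q_n,
\]
so $Q_n$ is an eigenfunction of $D$ with eigenvalue $\Lambda_n$. The main obstacle is Step~2: matching the staircase (odd/even) support of $A$ and $A^\ast$ against the level shifts $n\mapsto n\pm1$ is exactly what condition \eqref{condition} is engineered to resolve, and the verification in Step~3 that conjugation preserves polynomiality is then routine since $T^{\pm1}$ are degree-one polynomials.
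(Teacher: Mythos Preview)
Your proof is correct and follows essentially the same route as the paper: build the diagonal operator $\delta=\operatorname{diag}(\delta_i)$, conjugate by $T(x)=I+Ax$, and use condition \eqref{condition} to obtain the intertwining relations $A\Lambda_{n+1}=\Lambda_n A$ and $\Lambda_n A^\ast=A^\ast\Lambda_{n-1}$ (your entrywise odd/even bookkeeping makes explicit what the paper packages as ``$\Lambda_n(\tilde D)M=M\Lambda_{n+1}(\tilde D)$ for $M\in\mathcal M$''). The only cosmetic point is that a general second-order $\delta_i$ may carry a zero-order term $K_i(x)$, which your Step~3 display omits but which is absorbed into the conjugation just as routinely.
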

\begin{proof}
    By hypothesis, there exists a second-order difference operator $\delta_{i}  \in \mathcal{D}(w_{i})$, $1\leq i \leq m$. Let $P_{n}(x) = \operatorname{diag}(p_{n}^{w_{i}}(x),\ldots, p_{n}^{w_{m}}(x))$, and $\tilde{D} = \operatorname{diag}(\delta_{1},\dots,\delta_{m})$. Then, we have that 
    $$P_{n}(x) \cdot D = \Lambda_{n}(\tilde{D})P_{n}(x),$$
    where $\Lambda_{n}(\tilde{D}) = \operatorname{diag}(\Lambda_{n}(\delta_{1}),\dots,\Lambda_{n}(\delta_{m}))$. Let $A$ be the two-step nilpotent matrix as defined in \eqref{A}, and $T(x) = e^{Ax}= I + Ax$. We show that the sequence $Q_{n}$ is an eigenfunction of the difference operator $T\tilde{D}T^{-1}$. From \eqref{QnT}, we obtain 
    \begin{equation*}
        \begin{split}
            Q_{n}(x)\cdot D & = (P_{n}(x)+AP_{n+1}(x)-\|P_{n}\|^{2}A^{\ast}\|P_{n-1}\|^{-2}P_{n-1}(x))\tilde{D}T(x)^{-1} \\
            & = (\Lambda_{n}(\tilde{D})P_{n}(x)+A\Lambda_{n+1}(\tilde{D})P_{n+1}(x)-\|P_{n}\|^{2}A^{\ast}\|P_{n-1}\|^{-2}\Lambda_{n-1}(\tilde{D})P_{n-1}(x))T(x)^{-1}.
        \end{split}
    \end{equation*}
    By the condition in \eqref{condition}, for any matrix $M \in \mathcal{M}$ (where $\mathcal{M}$ is the set defined in \eqref{set M}), we have $\Lambda_{n}(\tilde{D}) M = M \Lambda_{n+1}(\tilde{D})$. Thus, the above equation becomes
    \begin{equation*}
        \begin{split}
            Q_{n}(x)\cdot D & = (\Lambda_{n}(\tilde{D})P_{n}(x) + \Lambda_{n}(\tilde{D})AP_{n+1}(x) - \Lambda_{n}(\tilde{D})\|P_{n}\|^{2}A^{\ast}\|P_{n-1}\|^{-2}P_{n-1}(x))T(x)^{-1} \\
            & = \Lambda_{n}(\tilde{D})Q_{n}(x)T(x)T(x)^{-1} = \Lambda_{n}(\tilde{D})Q_{n}(x).
        \end{split}
    \end{equation*}
    Hence, the statement holds.
\end{proof}
\begin{cor}
    Under the assumptions of the above theorem, assume further that each difference operator $\delta_i$ has the form
    $$
    \delta_i = \Delta f_i(x) + k_i(x) - \nabla g_i(x),
    $$
    where $f_i(x)$, $k_i(x)$, and $g_i(x)$ are scalar polynomials. Then, the sequence $Q_n$ constructed in Theorem~\ref{main} is an eigenfunction of the second-order matrix-valued difference operator
    \begin{equation}\label{difference}
    \begin{split}
        D &= \Delta \big( (I+A)F(x) + [A,F(x)]\,x \big) + A\big(F(x) - G(x)\big) + K(x) + [A,K(x)]\,x \\
        &\quad - \nabla\big( (I-A)G(x) + [A,G(x)]\,x \big),
    \end{split}
    \end{equation}
    where $A$ is the nilpotent matrix defined in \eqref{A}, and
    $$
    F(x) = \operatorname{diag}(f_1(x), \ldots, f_m(x)), \quad 
    K(x) = \operatorname{diag}(k_1(x), \ldots, k_m(x)), \quad 
    G(x) = \operatorname{diag}(g_1(x), \ldots, g_m(x)).
    $$
    Moreover, we have
    $$
    Q_n(x) \cdot D = \operatorname{diag}(\Lambda_n(\delta_1), \ldots, \Lambda_n(\delta_m)) \, Q_n(x).
    $$
\end{cor}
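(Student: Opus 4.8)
The plan is to specialize the operator $D = T\tilde{D}T^{-1}$ already produced in the proof of Theorem~\ref{bispectral} and rewrite it in the standard form \eqref{dif op}. Since that proof establishes $Q_n(x)\cdot(T\tilde{D}T^{-1}) = \Lambda_n(\tilde{D})\,Q_n(x)$ with $\Lambda_n(\tilde{D}) = \operatorname{diag}(\Lambda_n(\delta_1),\ldots,\Lambda_n(\delta_m))$, the eigenvalue assertion is immediate, and the whole content of the corollary reduces to computing the composition $T\tilde{D}T^{-1}$ explicitly in the case $\tilde{D} = \Delta F(x) + K(x) - \nabla G(x)$, with $F$, $K$, $G$ the stated diagonal matrices. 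Throughout I would use that $A$ is two-step nilpotent, so $A^2 = 0$; this yields $T(x)^{-1} = I - Ax$, and, because $A$ times any diagonal matrix again lies in $\mathcal{M}$ while the product of any two elements of $\mathcal{M}$ vanishes, it also yields the vanishing relations $AFA = AKA = AGA = 0$ that will kill the dangerous higher-order cross terms.

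First I would compute $(P\cdot T)\cdot\tilde{D} = \Delta(PT)F + (PT)K - \nabla(PT)G$ for an arbitrary matrix function $P$, via the Leibniz-type identities $\Delta(PT) = \Delta(P)\,T(x+1) + P\,\Delta(T)$ and $\nabla(PT) = \nabla(P)\,T(x) + P(x-1)\,\nabla(T)$, together with $\Delta(T) = \nabla(T) = A$ and $T(x+1) = I + A(x+1)$. The only contribution not yet expressed through $\Delta(P)$, $P(x)$ and $\nabla(P)$ is the term $P(x-1)A$, which I would rewrite as $\bigl(P(x) - \nabla(P)(x)\bigr)A$ so that it distributes cleanly between the part acting as multiplication and the part acting through $\nabla$.

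Next I would apply $\cdot\,T^{-1}$, that is multiply on the right by $I - Ax$, and collect the outcome according to the three operators $\Delta$, the identity, and $\nabla$ acting on $P$. Every cross term quadratic in $x$ carries a factor $A(\cdot)A$ with a diagonal matrix in the middle, hence vanishes by the relations noted above; what survives reorganizes, via the commutators $[A,F]$, $[A,K]$, $[A,G]$, into exactly the coefficient $(I+A)F + [A,F]x$ of $\Delta$, the coefficient $A(F-G) + K + [A,K]x$ of the identity, and $-\bigl((I-A)G + [A,G]x\bigr)$ of $\nabla$, matching \eqref{difference}. The only real difficulty is the bookkeeping: keeping the non-commutative factors in the correct order throughout the expansion and checking that every quadratic-in-$x$ piece is of the annihilated shape $A(\text{diagonal})A$, so that the resulting operator is genuinely second order with precisely the stated coefficients.
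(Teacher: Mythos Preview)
Your proposal is correct and follows exactly the approach the paper indicates: the paper's proof simply states that the result follows by computing the conjugation $D = T(x)\,\operatorname{diag}(\delta_1,\ldots,\delta_m)\,T(x)^{-1}$ explicitly and invoking Theorem~\ref{bispectral}, and you have spelled out precisely how that computation goes, including the key vanishing relations $A(\text{diagonal})A = 0$ that keep the operator second order. In fact you supply more detail than the paper does, but the route is identical.
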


\begin{proof}
    The result follows by explicitly computing the conjugation
    $$
    D = T(x)\, \operatorname{diag}(\delta_1, \ldots, \delta_m)\, T(x)^{-1},
    $$
    where $T(x) = e^{Ax} = I + Ax$, and using the proof of the above theorem.
\end{proof}

\

\section{Classical Bispectral Matrix Orthogonal Polynomials}\label{section 4}

In this section, we show that all classical families of scalar discrete orthogonal polynomials can be extended to the matrix-valued setting in a way that preserves bispectrality. More precisely, using Theorems~\ref{main} and~\ref{bispectral}, we construct matrix-valued orthogonal polynomials of arbitrary size $m \times m$ that are eigenfunctions of second-order difference operators whenever the scalar weights belong to classical families. Notably, the construction also allows for mixing different types of scalar weights—such as combining Charlier and Meixner weights—yielding new bispectral matrix-valued families beyond the single-family extensions. In the following theorem, we establish the bispectrality of the resulting matrix-valued extensions.

\begin{thm}\label{classical bispectral}
Let $w_1(x), \dots, w_m(x)$ be scalar discrete weights, all belonging to one of the following families:
\begin{itemize}
    \item[\textbf{(a)}] \textbf{Charlier-type:} $w_i(x) = \dfrac{b_i^x}{x!}$ with $b_i > 0$;

    \
    
    \item[\textbf{(b)}] \textbf{Meixner-type:} $w_i(x) = (\beta_i)_x \dfrac{c_i^x}{x!}$ with $\beta_i > 0$ and $0 < c_i < 1$;

    \
    
    \item[\textbf{(c)}] \textbf{Mixed Charlier–Meixner type:} each $w_i$ is either as in (a) or (b);

    \
    
    \item[\textbf{(d)}] \textbf{Krawtchouk-type:} $w_i(x) = \binom{N}{x} p_i^x (1 - p_i)^{N - x}$ with $0 < p_i < 1$ and fixed $N \in \mathbb{N}$;

    \
    
    \item[\textbf{(e)}] \textbf{Hahn-type:} $w_i(x) = \binom{\alpha_i + x}{x} \binom{\beta_i + N - x}{N - x}$ with $N \in \mathbb{N}$ and $\alpha_i, \beta_i > -1$ or $\alpha_{i},\beta_{i} < -N$, satisfying
    \begin{equation}\label{hahn cond}
    \alpha_i + \beta_i = \alpha_j + \beta_j + 2 \quad \text{for all odd } i \text{ and even } j.
    \end{equation}
\end{itemize}

Then, the sequence of matrix-valued orthogonal polynomials for the weight matrix  $W(x) = T(x)\operatorname{diag}(w_{1}(x)\dots,w_{m}(x))T(x)^{\ast}$, as constructed in \eqref{W}, is an eigenfunction of a second-order difference operator.
\end{thm}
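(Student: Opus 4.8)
The plan is to reduce the statement to the sufficient condition of Theorem~\ref{bispectral}: it is enough to produce, for each weight $w_i$, a second-order difference operator $\delta_i \in \mathcal{D}(w_i)$ having $p_n^{w_i}$ as eigenfunctions and whose eigenvalues satisfy $\Lambda_n(\delta_i) = \Lambda_{n+1}(\delta_j)$ for all odd $i$ and even $j$. The observation that buys the needed flexibility is that $\mathcal{D}(w_i)$ is an algebra containing the identity, so whenever $\delta_i$ is one of the classical operators recalled in Section~\ref{sub clas}, any affine modification $a\delta_i + bI$ (with $a \neq 0$) is again a second-order operator in $\mathcal{D}(w_i)$, now with eigenvalue $a\Lambda_n(\delta_i) + b$. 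Thus I am free to rescale and shift each classical eigenvalue independently, and the whole proof becomes a matter of aligning the resulting polynomials in $n$.

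First I would dispose of the families with eigenvalues linear in $n$, namely (a), (b), (c), (d). From Section~\ref{sub clas}, the Charlier and Krawtchouk operators both have eigenvalue $\Lambda_n = -n$, independent of the parameters $b_i$ and $p_i$, while the Meixner operator has eigenvalue $n(c_i-1)$. Rescaling each Meixner operator by $1/(1-c_i)$ normalizes its eigenvalue to $-n$ as well; doing the same in the mixed case (c), I may assume that every $\delta_i$ has eigenvalue $-n$. It then remains only to correct the unit shift between $\Lambda_n$ and $\Lambda_{n+1}$: replacing each even-indexed operator $\delta_j$ by $\delta_j + I$ changes its eigenvalue to $1-n$, so that $\Lambda_{n+1}(\delta_j + I) = -n = \Lambda_n(\delta_i)$ for every odd $i$ and even $j$. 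Hence \eqref{condition} holds for all such pairs with no restriction on the parameters.

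The remaining and genuinely constrained case is Hahn (e), which I expect to be the main obstacle, because its eigenvalue $\Lambda_n(\delta_i) = n(n+\alpha_i+\beta_i+1)$ is quadratic in $n$ and therefore cannot be normalized by rescaling without disturbing its leading term. Writing $\gamma_i = \alpha_i + \beta_i + 1$, I would compare $\Lambda_n(\delta_i) = n^2 + \gamma_i n$ for odd $i$ with $\Lambda_{n+1}(\delta_j) = n^2 + (2+\gamma_j)n + (1+\gamma_j)$ for even $j$. The $n^2$-coefficients agree automatically, but matching the $n$-coefficients forces $\gamma_i = \gamma_j + 2$, that is $\alpha_i+\beta_i = \alpha_j+\beta_j+2$, which is precisely the hypothesis \eqref{hahn cond}; this is the step that genuinely requires the parameter restriction, since no affine modification can remove it. Once \eqref{hahn cond} is imposed, only the constant term $1+\gamma_j$ survives, and it is absorbed by subtracting $(\alpha_j+\beta_j+2)I$ from each even-indexed operator, giving $\Lambda_{n+1}\big(\delta_j - (\alpha_j+\beta_j+2)I\big) = n^2 + \gamma_i n = \Lambda_n(\delta_i)$ for all odd $i$ and even $j$.

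In every case I have thus exhibited second-order operators $\delta_i \in \mathcal{D}(w_i)$ satisfying \eqref{condition}, and the conclusion follows directly from Theorem~\ref{bispectral}; the explicit form of the resulting matrix difference operator is then read off from the accompanying corollary. The only delicate point is the Hahn computation, where one must verify that the quadratic eigenvalue leaves exactly one unavoidable obstruction, namely the coefficient of $n$, matched by \eqref{hahn cond}, with every other discrepancy removable by a scalar shift.
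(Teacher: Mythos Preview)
Your proposal is correct and follows essentially the same route as the paper: both arguments invoke Theorem~\ref{bispectral} after affinely adjusting the classical scalar operators so that their eigenvalues align as in \eqref{condition}, handling the linear-eigenvalue families (a)--(d) by a unit shift on one parity class and the Hahn family by a constant subtraction that the hypothesis \eqref{hahn cond} makes possible. The only cosmetic differences are which parity class receives the shift and the precise constant used in the Hahn case; your bookkeeping there is in fact cleaner than the paper's.
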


\begin{proof}
The second-order difference operators
$$
\delta_{b_i} = \Delta b_i - \nabla x, \quad 
\delta_{\beta_i,c_i} = \Delta \, c_{i}(x+\beta_{i}) - \nabla x, \quad 
\delta_{p_i,N} = \Delta \, p_{i}(N-x) - \nabla \, x(1-p_{i})
$$
associated to the Charlier, Meixner, and Krawtchouk weights, respectively, belong to the algebra $\mathcal{D}(w_i)$ and satisfy
$$
\Lambda_n(-\delta_{b_i}) = n, \quad 
\Lambda_n\left( \frac{\delta_{\beta_i,c_i}}{c_i - 1} \right) = n, \quad 
\Lambda_n(-\delta_{p_i,N}) = n.
$$
Hence, in each case we obtain an operator in $\mathcal{D}(w_i)$ whose eigenvalue is $n$. By adding $1$ to such operators when $i$ is odd, and leaving it unchanged when $i$ is even, we construct new operators $\delta_i$ such that the condition \eqref{condition} holds. Then, by Theorem \ref{bispectral}, the resulting matrix-valued orthogonal polynomials are eigenfunctions of a second-order difference operator.

\

For the Hahn case, we have the second-order difference operator  
$$
\delta_{\alpha_i,\beta_i,N} = \Delta (x + \alpha_i + 1)(N - x)  - \nabla x(\beta_i + N - x + 1), \quad \text{with} \quad \Lambda_{n}(\delta_{\alpha_{i},\beta_{i},N}) = -n(n+\alpha_{i}+\beta_{i}+1).
$$
Then, we take $\delta_{i} = \delta_{\alpha_{i},\beta_{i},N}$ if $i$ is odd, and $\delta_{i} = \delta_{\alpha_{i},\beta_{i},N} - \alpha_{1}-\beta_{1}$ if $i$ is even, for all $1 \leq i \leq m$. Then, by using the relation on the parameters given in \eqref{hahn cond}, we obtain that $\Lambda_{n}(\delta_{i}) = \Lambda_{n+1}(\delta_{j})$ for all odd $i$ and even $j$, $1 \leq i,j \leq m$. Thus, by Theorem \ref{bispectral}, the statement holds.
\end{proof}

In the following subsections, we explicitly show some $2\times 2$ examples.

\subsection{Matrix-valued Krawtchouk}
Let $0 < p,s < 1$, $N \in \mathbb{N}$, and $a \in \mathbb{R} \setminus \{0 \}$. Consider the scalar Krawtchouk weights $w_{p,N}(x) = \binom{N}{x}p^{x}(1-p)^{N-x}$, and $w_{s,N}(x) = \binom{N}{x}s^{x}(1-s)^{N-x}$ supported on $\{0,1,\dots,N\}$. As in \eqref{W}, we construct the $2\times 2$ weight matrix
\begin{equation*}\label{krawt weight}
    \begin{split}
        W_{p,s,a,N}(x) & = \begin{pmatrix} 1 & ax \\ 0 & 1 \end{pmatrix} \begin{pmatrix} w_{p,N}(x) & 0 \\ 0 & w_{s,N}(x) \end{pmatrix} \begin{pmatrix} 1 & 0 \\ ax & 1 \end{pmatrix} \\ 
        & = \binom{N}{x}
\begin{pmatrix}
p^x (1-p)^{N-x} + a^2 x^2 s^x (1-s)^{N-x} & a x s^x (1-s)^{N-x} \\
a x s^x (1-s)^{N-x} & s^x (1-s)^{N-x}
\end{pmatrix}
    \end{split}
\end{equation*}
supported on $\{0, 1, \ldots, N\}$.

By Theorem \ref{main}, a sequence $\{Q_n^{p,s,N,a}(x)\}_{n=0}^N$ of orthogonal polynomials for $W_{p,s,a,N}$ is given explicitly by
\begin{equation*}\label{krawt 2}
Q_{n}^{p,s,N,a}(x) = 
\begin{psmallmatrix}
K_{n}^{(p,N)}(x) && a\left(K_{n+1}^{(s,N)}(x) - K_n^{(p,N)}(x) x \right) \\
- a n \frac{(1-s)^{n} s^{n} (N - n + 1)}{p^{n-1}(1-p)^{n-1}} K_{n-1}^{(p,N)}(x) && 
a^2 n \frac{(1-s)^{n} s^{n}(N - n + 1)}{p^{n-1}(1-p)^{n-1}} K_{n-1}^{(p,N)}(x) x + K_n^{(s,N)}(x)
\end{psmallmatrix},
\end{equation*}
where $K_n^{(p,N)}(x)$ and $K_{n}^{(s,N)}(x)$ are the monic orthogonal Krawtchouk polynomials for $w_{p,N}(x)$ and $w_{s,N}(x)$, respectively. We recall that we are taking the $(N+1)$-th polynomial of Krawtchouk as $K_{N+1}^{(s,N)}(x) = x(x-1)\cdots(x-N)$.

The polynomials $Q_{n}^{p,s,N,a}$ are eigenfunctions of the difference operator
\begin{equation*}
D = \Delta \begin{psmallmatrix} -p(N - x) & a(x(p - s) - s)(N - x) \\ 0 & -s(N - x) \end{psmallmatrix} + \begin{psmallmatrix}1 & -a N s \\ 0 & 0 \end{psmallmatrix} - \nabla  \begin{psmallmatrix} -x(1 - p) & -a x(x(p - s) + s - 1) \\ 0 & -x(1 - s) \end{psmallmatrix}.
\end{equation*}
We have that
$$Q_{n}^{p,s,N,a}(x) \cdot D = \begin{pmatrix}n+1 & 0 \\0 & n \end{pmatrix}Q_{n}^{p,s,N,a}(x).
$$

By using the explicit expression of the sequence $\{Q_{n}^{p,s,N,a}(x)\}$ and the three-term recurrence relation satisfied by the scalar Krawtchouk polynomials, one can directly verify that $Q_n^{p,s,N,a}(x)$ satisfies a three-term recurrence relation of the form
$$
Q_{n}^{p,s,N,a}(x)\, x = A_{n} Q_{n+1}^{p,s,N,a}(x) + B_{n} Q_{n}^{p,s,N,a}(x) + C_{n} Q_{n-1}^{p,s,N,a}(x),
$$
where
\begin{equation*}
\begin{split}
A_n &= 
\begin{psmallmatrix}
1 & 
\frac{-a n p(p - 1)(N + 1 - n)\left(N (p - s) - 2n (p - s) + 2s - 1\right)}{\theta(n)} \\
0 & 
\frac{n p(p - 1)(N + 1 - n)(\mu_n a^2 + 1)}{\theta(n)}
\end{psmallmatrix}, \\
B_n &= 
\begin{psmallmatrix}
\frac{\theta(n)((N-2(n+1))s+n+1)+np(p-1)(N+1-n)(N(p-s)-2n(p-s)+2s-1)}{\theta(n)} & \frac{a((N - n)(p - s)(n(p + s - 1) + s) + (p - 1)(n(p + s) - N s))}{\mu_n a^2 + 1} \\
\frac{\mu_{n}a((n-N)(n(p-s)(p+s-1)-s^{2}+s)+n(p-p^{2}))}{\theta(n)} & \frac{(N - 2n)(\mu_n a^2 p + s) + \mu_n a^2(2p + n - 1) + n}{\mu_n a^2 + 1} 
\end{psmallmatrix}, \\
C_n &= 
\begin{psmallmatrix}
-\frac{\mu_n a^2 s(s - 1)(n + 1)(N - n) + n p(p - 1)(N + 1 - n)}{\mu_n a^2 + 1} & 0 \\
-\frac{\mu_n a (N p - N s - 2n p + 2n s + 2p - 1)}{\mu_n a^2 + 1} & -n s(s - 1)(N + 1 - n)
\end{psmallmatrix},
\end{split}
\end{equation*}
where
\begin{equation*}
    \begin{split}
        \mu_{n} & = n \left ( \frac{1-s}{1-p} \right ) ^{n-1} \left ( \frac{s}{p} \right)^{n-1}(1-s)s(N-n+1), \\
        \theta(n) & = \mu_n a^2 s (s - 1) (n + 1) (N - n) + n p (p - 1) (N + 1 - n).
    \end{split}
\end{equation*}

To illustrate the construction, we compute the five polynomials for $p = s = \frac{1}{2}$, $N = 4$. We have

$$W(x) = \binom{4}{x}\frac{1}{16}\begin{pmatrix}1 + a^{2}x^{2} & ax \\ax & 1 \end{pmatrix},$$

\begin{equation*}
    \begin{split}
        Q_{0}(x) & = \begin{psmallmatrix} 1 & -2a \\ 0 & 1 \end{psmallmatrix},\quad Q_1(x) = \begin{psmallmatrix} x-2 & -a(2x-3) \\ -a & a^2x + x-2 \end{psmallmatrix}, \quad Q_{2}(x) = \begin{psmallmatrix} 
(x-1)(x-3) & -\frac{1}{2}a(4x^2-13x+6) \\ 
-\frac{3}{2}a(x-2) & \frac{3}{2}a^2x^2 - 3a^2x + x^2 -4x +3 
\end{psmallmatrix}, \\
        Q_{3}(x) & = \begin{psmallmatrix}
\left(\frac{1}{2}x -1\right)(2x^2 -8x +3) & -\frac{1}{2}a(4x^3 -21x^2 +26x -3) \\
-\frac{3}{2}a(x-1)(x-3) & 
  \frac{3}{2}a^2x^3 -6a^2x^2 + \frac{9}{2}a^2x + x^3 -6x^2 + \frac{19}{2}x -3
\end{psmallmatrix}, \\
        Q_{4}(x) & = \begin{psmallmatrix}
x^4 -8x^3 +20x^2 -16x + \frac{3}{2} & -\frac{1}{2}ax(2x-5)(2x^2 -10x +9) \\
-\frac{1}{2}a(x-2)(2x^2 -8x +3) & 
  a^2x^4 -6a^2x^3 + \frac{19}{2}a^2x^2 -3a^2x + x^4 -8x^3 +20x^2 -16x + \frac{3}{2}
\end{psmallmatrix}.
    \end{split}
\end{equation*}

\subsection{Matrix-valued Hahn} 
We now present a matrix-valued version of the Hahn polynomials. Let $N \in \mathbb{N}$ and parameters $\alpha, \beta, \tilde{\alpha}, \tilde{\beta}$ such that $\alpha, \beta, \tilde{\alpha}, \tilde{\beta} > -1$ or $\alpha, \beta, \tilde{\alpha}, \tilde{\beta} < -N$. While no other conditions on these parameters are strictly necessary to construct a matrix-valued Hahn polynomial using Theorem \ref{main}, we impose the additional constraint $\alpha + \beta = \tilde{\alpha} + \tilde{\beta} + 2$ to ensure the resulting matrix polynomials are eigenfunctions of a difference operator. (Interestingly, this same condition on the parameters also arises in the continuous setting when constructing matrix-valued Jacobi polynomials that are eigenfunctions of a second-order differential operator; see \cite{BP25}.)

Let $H^{(\alpha,\beta,N)}_{n}(x)$ and $H^{(\tilde{\alpha},\tilde{\beta},N)}_{n}(x)$ be the monic Hahn polynomials orthogonal with respect to the weights $w_{1}(x) = \binom{\alpha+x}{x}\binom{\beta+N-x}{N-x}$ and $w_{2}(x) = \binom{\tilde{\alpha}+x}{x}\binom{\tilde{\beta}+N-x}{N-x}$, respectively. We construct the weight matrix as in \eqref{W}, for $a \in \mathbb{R} \setminus \{0\}$ and $x \in \{0,1,\ldots,N\}$ we have 
\begin{equation*}\label{Hahn weight}
W_{\alpha,\beta,\tilde{\alpha},\tilde{\beta},a}(x) = \begin{pmatrix}
\binom{\alpha+x}{x}\binom{\beta+N-x}{N-x} + a^{2}x^{2} \binom{\tilde{\alpha}+x}{x}\binom{\tilde{\beta}+N-x}{N-x} & ax\binom{\tilde{\alpha}+x}{x}\binom{\tilde{\beta}+N-x}{N-x} \\
ax\binom{\tilde{\alpha}+x}{x}\binom{\tilde{\beta}+N-x}{N-x} & \binom{\tilde{\alpha}+x}{x}\binom{\tilde{\beta}+N-x}{N-x}
\end{pmatrix}.
\end{equation*}
Now, by Theorem \ref{main}, we obtain an explicit sequence of orthogonal polynomials for $W_{\alpha,\beta,\tilde{\alpha},\tilde{\beta},a}$ given by 
\begin{equation*}\label{Hahn 2}
Q_{n}^{\alpha,\beta,\tilde{\alpha},\tilde{\beta},N,a}(x) = \begin{pmatrix} 
H_{n}^{(\alpha,\beta,N)}(x) & a(H_{n+1}^{(\tilde{\alpha},\tilde{\beta},N)}(x)-H_{n}^{(\alpha,\beta,N)}(x)x) \\
-a\mu_{n}H_{n-1}^{(\alpha,\beta,N)}(x) & a^{2}\mu_{n}H_{n-1}^{(\alpha,\beta,N)}(x)x + H_{n}^{(\tilde{\alpha},\tilde{\beta},N)}(x)
\end{pmatrix}, \quad n=0,\dots,N,
\end{equation*}
where $\mu_{n} = \frac{(\tilde{\alpha}+1)_{n}(\tilde{\beta}+1)_{n}n(N+1-n)}{(\alpha+1)_{n-1}(\beta+1)_{n-1}(n+\alpha+\beta+N)(n+\alpha+\beta-1)}$, and $H^{(\tilde{\alpha},\tilde{\beta},N)}_{N+1}(x) = x(x-1)\cdots(x-N)$.  Applying Theorem \ref{classical bispectral}, we obtain that the sequence $Q_{n}^{\alpha,\beta,\tilde{\alpha},\tilde{\beta},N,a}(x)$ is an eigenfunction of the difference operator $D$ given by
\begin{equation*}
     \begin{split}
         D & = \Delta \begin{psmallmatrix}(x+\alpha+1)(x-N) & a(x(\tilde{\alpha}-\alpha)+\tilde{\alpha}+x+1)(x-N) \\ 0 & (x+\tilde{\alpha} + 1)(x-N) \end{psmallmatrix} + \begin{psmallmatrix} 0 & -a(N(\tilde{\alpha}+1)+x(\alpha-\tilde{\alpha}+\beta -\tilde{\beta}-2)) \\ 0 & -(\alpha + \beta) \end{psmallmatrix} \\
 & \quad - \nabla \begin{psmallmatrix} x(x-\beta-N-1) & ax(x(\beta-\tilde{\beta})+N+\tilde{\beta}-x+1) \\ 0 & x(x-\tilde{\beta}-N-1)\end{psmallmatrix}.
     \end{split}
 \end{equation*}
 It follows that 
 $$Q_{n}^{\alpha,\beta,\tilde{\alpha},\tilde{\beta},N,a}(x) \cdot D = \begin{pmatrix} n(n+\alpha+\beta+1) & 0 \\ 0 & (n-1)(n+\alpha+\beta)\end{pmatrix} Q_{n}^{\alpha,\beta,\tilde{\alpha},\tilde{\beta},N,a}(x).$$

\subsection{Matrix-valued Charlier-Meixner}
As in \cite{BP25}, where we constructed matrix-valued orthogonal polynomials by combining polynomials from different families (Hermite and Laguerre), we can follow a similar approach here by combining Charlier and Meixner polynomials.

Let $c, \beta>0$, and $0<b<1$. We consider the scalar Charlier weight $w_{c}(x) = \frac{c^{x}}{x!}$ and the scalar Meixner weight $w_{\beta,b}(x) = (\beta)_{x}\frac{b^{x}}{x!}$. For $a \neq 0$, we define the $2 \times 2$ weight matrix as in \eqref{W} 

$$
W(x) = \begin{pmatrix} \frac{c^{x}}{x!} + a^{2}x^{2}(\beta)_{x}\frac{b^{x}}{x!} & ax(\beta)_{x}\frac{b^{x}}{x!} \\ ax(\beta)_{x}\frac{b^{x}}{x!} & (\beta)_{x}\frac{b^{x}}{x!} \end{pmatrix}, \quad x\in \mathbb{N}_{0}.
$$

By Theorem \ref{main}, the corresponding sequence of orthogonal polynomials is given by  

$$
Q_{n}(x) = \begin{pmatrix} C_{n}^{(c)}(x) & a(M_{n+1}^{(\beta,b)}(x) - C_{n}^{(c)}(x)x) \\ -a \frac{(\beta)_{n}nb^{n}}{c^{n-1}(1-b)^{2n+\beta}e^{c}} C_{n-1}^{(c)}(x) & a^{2} \frac{(\beta)_{n}nb^{n}}{c^{n-1}(1-b)^{2n+\beta}e^{c}} C_{n-1}^{(c)}(x) + M_{n}^{(\beta,b)}(x)\end{pmatrix},
$$
where $C_{n}^{(c)}(x)$ and $M_{n}^{(\beta,b)}(x)$ are the $n$-th monic orthogonal polynomials associated with the weights $w_{c}(x)$ and $w_{\beta,b}(x)$, respectively.

In Theorem \ref{classical bispectral} we prove that this sequence is bispectral. We have that $Q_{n}(x)$ satisfy the second-order difference equation  

$$
Q_{n}(x) \cdot D = \begin{pmatrix} (n+1) & 0 \\ 0 & n \end{pmatrix}Q_{n}(x),
$$
where  
$$
D = \Delta \begin{psmallmatrix} -c & acx + \frac{ab}{b-1}(x+1)(x+\beta) \\ 0 & \frac{b(x+\beta)}{b-1}\end{psmallmatrix} + \begin{psmallmatrix} 1 & \frac{ab\beta}{b-1} \\ 0 & 0 \end{psmallmatrix} - \nabla \begin{psmallmatrix} x & -\frac{ax(x(b-2)+1)}{b-1} \\ 0 & \frac{x}{b-1} \end{psmallmatrix}.
$$
It also satisfies the three-term recurrence relation $Q_{n}(x)x = A_{n}Q_{n+1}(x) + B_{n}Q_{n}(x) + C_{n}Q_{n-1}(x)$, with

$$A_{n} = \begin{psmallmatrix}
1 & 
-\frac{n c a (b - 1) \left[b(n + 1)(n + \beta + 2) + c(b - 1) - n - b\right]}{m_{n}a^{2}b(n + 1)(\beta + n) + c n (b - 1)^{2}} 
\\
0 & 
\frac{c n (b - 1)^2 (m_{n}a^{2} + 1)}{m_{n}a^{2}b(n + 1)(\beta + n) + c n (b - 1)^{2}}
\end{psmallmatrix}, \quad C_{n} = \begin{psmallmatrix}
\frac{m_{n}a^{2}b(n+1)(n+\beta)}{(m_{n}a^{2} + 1)(b - 1)^{2}} + \frac{n c}{m_{n}a^{2} + 1} & 0 \\
-\frac{m_{n}a(n + c - 1)}{m_{n}a^{2} + 1} - \frac{m_{n}a b n (n + \beta)}{(m_{n}a^{2} + 1)(b - 1)} & \frac{b n (n + \beta - 1)}{(b - 1)^{2}}
\end{psmallmatrix}$$

$$B_{n} = \begin{psmallmatrix}
-\frac{b(n + 1)(n + \beta + 1)}{b - 1} + \frac{n c (b - 1)\left[b(n + 1)(\beta + 1) + b n(n + 2) + b c - c - n\right]}{m_{n}a^{2}b(n + 1)(\beta + n) + c n (b - 1)^{2}} & 
-\frac{a\left[n c (b - 1)^2 - b(n + 1)(n + \beta)\right]}{(m_{n}a^{2} + 1)(b - 1)^2} 
\\
-\frac{m_{n} a \left[n\left((b - 1)^2 c - b(n + 1)\right) - b \beta (n + 1)\right]}{m_{n}a^{2}b(n + 1)(\beta + n) + c n (b - 1)^{2}} & 
\frac{m_{n}a^{2}(n + c - 1)(b - 1) - b n(n + \beta)}{(m_{n}a^{2} + 1)(b - 1)}
\end{psmallmatrix},$$
where $m_{n} = \frac{(\beta)_{n}b^{n}n}{(1-b)^{2n+\beta}c^{n-1}e^{c}}$.

\section{Appendix}
Let $w$ be a scalar weight supported on $\{0,1,\dots,N\}$. The sequence of monic orthogonal polynomials $\{p_{n}\}_{n=0}^{N}$ satisfies a three-term recurrence relation of the form 
$$
p_{n}(x) x = p_{n+1}(x) + b_{n} p_{n}(x) + c_{n} p_{n-1}(x),
$$
where
\begin{equation*}
\begin{split}
b_{n} &= \langle p_{n}(x) x, p_{n}(x) \rangle_{w} \|p_{n}\|^{-2}, \\
c_{n} &= \langle p_{n}(x) x, p_{n-1}(x) \rangle_{w} \|p_{n-1}\|^{-2}.
\end{split}
\end{equation*}

\begin{prop}\label{N+1}
If we construct the $(N+1)$-th polynomial using the three-term recurrence relation, i.e.,
$$
p_{N+1}(x) = p_{N}(x) x - b_{N} p_{N}(x) - c_{N} p_{N-1}(x),
$$
then it follows that
$$
p_{N+1}(x) = x (x - 1) \cdots (x - N).
$$
\end{prop}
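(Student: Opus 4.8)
The plan is to compare the polynomial $p_{N+1}$ produced by the recurrence with the explicit product $q(x) := x(x-1)\cdots(x-N)$ by showing that their difference vanishes. The crucial observation is that $q$ is identically zero on the support $\mathcal{J} = \{0,1,\dots,N\}$, since each factor $x-j$ kills the point $j$. Consequently $\langle q, p \rangle_{w} = \sum_{x=0}^{N} q(x)\,\overline{p(x)}\,w(x) = 0$ for every polynomial $p$, and in particular $\|q\|^{2}=0$. Since $q$ is monic of degree $N+1$ and the recurrence-defined $p_{N+1}$ is also monic of degree $N+1$ (the recurrence adds the monomial $x\cdot p_N$ to lower-degree terms), the difference $r := p_{N+1} - q$ has degree at most $N$. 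The goal is therefore to prove $r=0$ by exhibiting enough orthogonality.

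First I would verify that the recurrence-defined $p_{N+1} = x\,p_N - b_N p_N - c_N p_{N-1}$ is orthogonal to each $p_k$, $0\le k\le N$. For $k\le N-2$ this is immediate: multiplication by $x$ is self-adjoint for $\langle\cdot,\cdot\rangle_w$ (as $x$ is real on $\mathcal{J}$), so $\langle x p_N, p_k\rangle_w = \langle p_N, x p_k\rangle_w = 0$ because $\deg(x p_k) \le N-1 < N$, while the remaining two terms vanish by orthogonality of $\{p_n\}$. For $k=N-1$, expanding $x p_{N-1} = p_N + b_{N-1}p_{N-1}+c_{N-1}p_{N-2}$ gives $\langle x p_N, p_{N-1}\rangle_w = \|p_N\|^2$, and the definition $c_N = \langle x p_N, p_{N-1}\rangle_w\,\|p_{N-1}\|^{-2}$ forces $c_N\|p_{N-1}\|^2 = \|p_N\|^2$, so the contributions cancel. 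For $k=N$, the definition $b_N = \langle x p_N, p_N\rangle_w\,\|p_N\|^{-2}$ similarly makes $\langle x p_N, p_N\rangle_w - b_N\|p_N\|^2 = 0$. Hence $\langle p_{N+1}, p_k\rangle_w = 0$ for all $0\le k\le N$.

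To finish, I would combine this with the fact that $q$ is orthogonal to everything: for each $k$ one gets $\langle r, p_k\rangle_w = \langle p_{N+1}, p_k\rangle_w - \langle q, p_k\rangle_w = 0$. Because $\deg r \le N$ and $\{p_0,\dots,p_N\}$ is a basis of polynomials of degree $\le N$ modulo the relations on $\mathcal{J}$, one writes $r = \sum_{k=0}^{N}\alpha_k p_k$; testing against $p_k$ yields $\alpha_k\|p_k\|^2 = 0$, and since $\|p_k\|^2 > 0$ for $0\le k\le N$ (these genuine orthogonal polynomials have positive norm on the finite support) we conclude $\alpha_k = 0$ for all $k$, so $r\equiv 0$ and $p_{N+1}=q$.

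The argument is essentially routine once the right objects are in place; the only point requiring care—and the place I would be most careful—is the bookkeeping of the constants $b_N$ and $c_N$, ensuring their defining formulas are exactly what cancels the inner products at the top two indices $k=N-1$ and $k=N$. The conceptual heart, namely that $x(x-1)\cdots(x-N)$ has zero $w$-norm and is orthogonal to all polynomials on $\mathcal{J}$, is what makes the comparison with $p_{N+1}$ collapse, and it is worth stating explicitly before the degree-counting step.
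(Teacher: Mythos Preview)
Your proof is correct and follows essentially the same approach as the paper: both use the orthogonality of the $p_k$ together with the defining formulas for $b_N$ and $c_N$ to see that $p_{N+1}$ pairs trivially against $p_0,\dots,p_N$. The paper's final step is slightly more direct, concluding that $p_{N+1}$ actually vanishes at each $x=0,1,\dots,N$ (hence equals $x(x-1)\cdots(x-N)$ by a monic degree count), rather than subtracting $q$ and appealing to non-degeneracy of the inner product on polynomials of degree at most $N$.
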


\begin{proof}
The polynomial $p_{N}(x)x$ is monic of degree $N+1$. There exist
$a_{N},\ldots,a_{0}\in\mathbb{C}$ such that the equality
$$
p_{N}(x) x = a_{N} p_{N}(x) + a_{N-1} p_{N-1}(x) + \cdots + a_{0} p_{0}(x)
$$
holds for all $x=0,\dots,N$.
Using the orthogonality of the sequence $\{p_{n}\}$, one obtains $a_{j} = 0$ for all $j < N-1$.

Therefore,
$$
p_{N}(x) x = a_{N} p_{N}(x) + a_{N-1} p_{N-1}(x),
$$
where 
$$
a_{N} = b_{N} = \langle p_{N}(x) x, p_{N}(x) \rangle_{w} \|p_{N}\|^{-2}, \quad 
a_{N-1} = c_{N} = \langle p_{N}(x) x, p_{N-1}(x) \rangle_{w} \|p_{N-1}\|^{-2}.
$$
Thus, the polynomial
$$
p_{N}(x) x - b_{N} p_{N}(x) - c_{N} p_{N-1}(x)
$$
is a monic polynomial of degree $N+1$ that vanishes at $x = 0,1,\dots,N$. Hence, it must be the polynomial
$$
x (x - 1) \cdots (x - N).
$$
\end{proof}

It is therefore natural to extend the sequence beyond the support by setting
$$
p_{N+1}(x)=x(x-1)\cdots(x-N).
$$
This is precisely the monic polynomial obtained by continuing the three-term recurrence relation to the index $N+1$. Moreover, since it vanishes on the support of $w$, it is orthogonal to every polynomial of degree at most $N$. This extension will be used in the expressions \eqref{Qn} and \eqref{Qn exp} for the matrix-valued orthogonal polynomials associated with the weight matrix $W$.

\

\bibliographystyle{amsplain} 
\bibliography{referencias} 

\end{document}